\newcommand\sqdiag{\tikz{\draw (0,0) rectangle (0.7em, 0.7em); \draw (0, 0.7em) -- (0.7em, 0);}} 
\DeclareMathOperator{\bdiag}{\sqdiag}
\pgfplotsset{compat=1.17} 
\DeclareMathOperator{\tr}{tr}
\crefname{procedure}{Procedure}{procedures}
\title{An Adaptive Covariance Parameterization Technique for the Ensemble Gaussian Mixture Filter\thanks{Submitted to the arXiv \today.\funding{This work was sponsored in part by DARPA (Defense Advanced Research Projects Agency) under STTR contract number W31P4Q-21-C-0032.}}}
\author{Andrey A Popov\thanks{Oden Institute for Computational Engineering and Sciences, University of Texas at Austin
  (\email{andrey.a.popov@utexas.edu}).}
\and Renato Zanetti\thanks{Department of Aerospace Engineering \& Engineering Mechanics, University of Texas at Austin
  (\email{renato@utexas.edu}).}}
\newcommand\1{1}
\def\!#1{\mathcal{#1}}
\def\*#1{\boldsymbol{\mathbf{#1}}}
\def\|#1{\textnormal{#1}}
\def\##1{\mathfrak{#1}}
\def\norm#1{\left\lVert#1\right\rVert}
\DeclareMathOperator*{\argmax}{arg\,max}
\begin{document}

\maketitle

\begin{abstract}
The ensemble Gaussian mixture filter combines the simplicity and power of Gaussian mixture models with the provable convergence and power of particle filters.
The quality of the ensemble Gaussian mixture filter heavily depends on the choice of covariance matrix in each Gaussian mixture.
This work extends the ensemble Gaussian mixture filter to an adaptive choice of covariance based on the parameterized estimates of the sample covariance matrix.
Through the use of the expectation maximization algorithm, optimal choices of the covariance matrix parameters are computed in an online fashion. 
Numerical experiments on the Lorenz '63 equations show that the proposed methodology converges to classical results known in particle filtering. Further numerical results with more advances choices of covariance parameterization and the medium-size Lorenz '96 equations show that the proposed approach can perform significantly better than the standard EnGMF, and other classical data assimilation algorithms.
\end{abstract}

\begin{keywords}
data assimilation, Gaussian mixture model, particle filtering, expectation maximization
\end{keywords}

\begin{MSCcodes}
60G25, 62L12, 62M20, 93E11
\end{MSCcodes}

\section{Introduction}
Sequential data assimilation~\cite{asch2016data,reich2015probabilistic} aims to perform Bayesian inference on the state of some natural process from an inaccurate computational model and sparse and noisy observations.
Traditionally, particle filter methods have been viewed as theoretically nice, but practically useless for inference of high dimensional systems. 
Recent advances in particle filters for high dimensions~\cite{van2019particle} have challenged this view.

The ensemble Gaussian mixture filter (EnGMF)~\cite{anderson1999monte, liu2016efficient, yun2022kernel} and its related cousin the adaptive Gaussian mixture filter (AGMF)~\cite{stordal2011bridging, van2019particle} are sequential data assimilation algorithms that extend the idea of Gaussian mixture model (GMM) kernel density estimation (KDE) to the front of data assimilation. 
The EnGMF is based on the observation that Gaussian mixture models, under linear observation assumptions,  are closed under multiplication~\cite{anderson2012optimal}.
The quality of the inference begotten by the EnGMF is directly related to the accuracy of the GMM assumption about the prior distribution. 
This prior distribution is typically determined in whole by Monte Carlo samples through choices of the means and covariances of the GMM. 
While the choice of means is readily apparent as that of the Monte Carlo samples, the choice of covariance is less straightforward, and is the subject of much of the research surrounding KDE~\cite{silverman2018density, breiman1977variable}.

The key innovations of this work are as follows: 
We first provide theoretical results that show the convergence of the EnGMF for a certain class of probabilistic assumption on the bandwidth parameter in the classical EnGMF algorithm.
we next generalize the EnGMF by introducing the parameterization of statistical covariance matrix estimates from other ensemble-based filters to the EnGMF. 
We finally show how the EnGMF machinery could be used to choice the value of these parameters in an optimal adaptive fashion by utilizing the expectation maximization algorithm.
Thus the sum total of these results is the adaptive Gaussian mixture filter (AEnGMF) which utilizes all this machinery for inference.

This work is organized as follows: we first introduce the data assimilation problem and the EnGMF in~\cref{sec:background}. We next present the adaptive ensemble Gaussian mixture filter in~\cref{sec:AEnGMF} along with the expectation maximization algorithm in~\cref{sec:EM}. Numerical experiments are provided in~\cref{sec:numerical-experiments}, and concluding remarks in~\cref{sec:conclusion}.

\section{Background}
\label{sec:background}

Given a model that evolves a natural process of interest from time index $i-1$ to time index $i$,
\begin{equation}\label{eq:model}
    x^t_i = \!M(x^t_{i-1}) + \xi_i,
\end{equation}
with model error $\xi_i$, the goal is to estimate the true state $x^t$ of said process given some non-linear observation,
\begin{equation}\label{eq:observations}
    y_i = \!H(x^t_i) + \eta_i,
\end{equation}
with observation operator $\!H$ and an additive error term $\eta_i$. Denote with $Y_i$ all the observations up to and including time index $i$,
\begin{equation}
    Y_i = \{y_1,\ y_2,\ ...\ y_i\},
\end{equation}
given a prior at time index $i$, namely $x^b_i = x^t_i | Y_{i-1}$, we aim to perform Bayesian inference on these two sources of information,
\begin{equation}\label{eq:Bayesian-inference}
    p(x^b_i | y_i) \propto p(x^b_i)\ p(y_i | x^b_i)
\end{equation}
resulting in the `analysis', $x^a_i = x^b_i | y_i = x^t_i | Y_i$.

For the remainder of this paper, we assume that the model error $\xi_i$ is always zero, and thus the model~\cref{eq:model} is exact.

We next describe how a solution to~\cref{eq:Bayesian-inference} can be achieved using Monte Carlo sampling and the EnGMF.

\subsection{The Ensemble Gaussian Mixture Filter}
Assume that we have a collection of $N$ particles at time index $i$ that is represented as $\*X^b_i = [x^b_{i,1}, x^b_{i,2}, \dots x^b_{i,N}]$ that are weighted samples from the prior distribution $p(x^b_i)$ with weights $\{u_{i,j}\}_{j=1}^N$. Given non-linear observations of the truth~\cref{eq:observations}, our aim is to find a collection of $N$ particles $\*X^a_i = [x^a_{i,1}, x^a_{i,2}, \dots x^a_{i,N}]$ that are samples from the posterior distribution, such that the posterior is the prior conditioned by the observations,
\begin{equation}
    x^a_i = x^b_i | y_i,
\end{equation}
solving the Bayesian inference problem~\cref{eq:Bayesian-inference}.

As the prior distribution of the particles is unknown, an assumption about this distribution can be made. 
From kernel density estimation theory, the ensemble Gaussian mixture filter (EnGMF) assumes that the distribution of the prior state at time index $i$, $x^b_i$ is given by the Gaussian mixture,
\begin{equation}\label{eq:prior-Gaussian-mixture}
    x^b_i \sim \sum_{j=1}^N u_{i,j}\,\!N\left(\bar{x}^b_{i,j},\, \*B^b_{i,j}\right),
\end{equation}
where each mean exactly corresponds to one of the particles in the ensemble,
\begin{equation}
    \bar{x}^b_{i,j} \coloneqq x^b_{i,j}.
\end{equation}

The observation distribution at time index $i$ is given by the Gaussian mixture,
\begin{equation}\label{eq:observation-Gaussian-mixture}
    y_i|x^t_i \sim \sum_{k=1}^M v_{i,k}\,\!N\left(\bar{y}_{i,k},\, \*R_{i,k}\right),
\end{equation}
which is a generalization of the typical Gaussian assumptions on the observation error made in data assimilation literature.

The posterior distribution at time index $i$ is defined by the Gaussian mixture,
\begin{equation}\label{eq:posterior-Gaussian-mixture}
    x^a_i \sim \sum_{j=1}^N\sum_{k=1}^M w_{i,j,k}\,\!N\left(\bar{x}^a_{i,j,k},\, \*B^a_{i,j,k}\right),
\end{equation}
with the following set of definitions,
\begin{equation}\label{eq:posterior-update}
\begin{aligned}
    \bar{x}^a_{i,j,k} &= \bar{x}^b_{i,j} - \*G_{i,j,k}\left(\!H(\bar{x}^b_{i,j}) - \bar{y}_{i,k}\right),\\
    \*B^a_{i,j,k} &= \left(\*I -  \*G_{i,j,k}\*H_{i,j}^T\right)\*B^b_{i,j},\\
    \*G_{i,j,k} &= \*B^b_{i,j}\*H_{i,j}^T\left(\*H_{i,j}\*B^b_{i,j}\*H_{i,j}^T + \*R_{i,k}\right)^{-1},\\
    w_{i,j,k} &\propto u_{i,j} v_{i,k}\, \!N\left(\bar{y}_{i,k}\,\middle|\, \!H(\bar{x}^b_{i,j}),\, \*H_{i,j}\*B^b_{i,j}\*H_{i,j}^T + \*R_{i,k}\right),\\
    \*H_{i,j} &= \left.\frac{d \!H}{d x}\right\rvert_{x = \bar{x}^b_{i,j}}.
\end{aligned}
\end{equation}
where $\bar{x}^a_{i,j,k}$, are the analysis Gaussian mixture means,  $\*B^a_{i,j,k}$ are the analysis Gaussian mixture covariances, $\*G_{i,j,k}$ is similar to a gain matrix, $w_{i,j,k}$ are the Gaussian mixture weights, and $\*H_{i,j}$ is the linearization of the observation operator around $\bar{x}^b_{i,j}$.
When the observation operator is linear, $\!H(x) = \*H x$, the posterior GMM~\cref{eq:posterior-Gaussian-mixture} is exactly the posterior corresponding to the assumed prior~\cref{eq:prior-Gaussian-mixture} and the observation~\cref{eq:observation-Gaussian-mixture} distributions.

Each Gaussian distribution in~\cref{eq:prior-Gaussian-mixture} has the following probability density function,
\begin{equation}\label{eq:Gaussian}
    \!N(x | \bar{x}^b_{i,j}, \*B^b_{i,j}) = \left\lvert 2 \pi\*B^b_{i,j}\right\rvert^{-\frac{1}{2}} e^{-\frac{1}{2}\norm{x - \bar{x}^b_{i,j} }_{\*B^{b,-1}_{i,j}}},
\end{equation}
with the other Gaussian distributions in \cref{eq:observation-Gaussian-mixture,eq:posterior-Gaussian-mixture,eq:posterior-update} having a similar form.

\begin{remark}[Normalization Factors]\label{rem:normalizing-factor}
Note that when either the observation operator $\!H$ is non-linear, or the kernel covariance matrices $\*B_{i,j}$ are not identical, extra care must be taken when computing the weights $w_{i,j,k}$ in \cref{eq:posterior-update}, as the covariances $\*H_{i,j}\*B^b_{i,j}\*H_{i,j}^T + \*R_{i,k}$ are not necessarily equal. This means that, the normalization factors in each Gaussian term (similar to \cref{eq:Gaussian}), 
\begin{equation}\label{eq:weight-normalization-factor}
    \left\lvert 2 \pi\left( \*H_{i,j}\*B^b_{i,j}\*H_{i,j}^T + \*R_{i,k} \right)\right\rvert^{-\frac{1}{2}},
\end{equation}
are required to be computed. This can be performed in a computationally efficient manner through the use of the Cholesky decomposition and the log-sum-exp trick~\cite{blanchard2021accurately}.
\end{remark}

While the transformation of the distribution in~\cref{eq:posterior-Gaussian-mixture} results in an estimate of the posterior distribution, the means $\bar{x}^a_{i,j}$ of this distribution are not actually samples from this distribution, thus it is not the case that the posterior samples are equivalent to these means,
\begin{equation}
    x^a_{i,j} \not= \bar{x}^a_{i,j}.
\end{equation}
A resampling procedure is therefore required in order to obtain independently and identically distributed (iid) samples from~\cref{eq:posterior-Gaussian-mixture}.
What follows is one such procedure.

\begin{procedure}[EnGMF resampling]\label{pro:resampling-procedure}
Given the final posterior Gaussian mixture distribution in \cref{eq:posterior-Gaussian-mixture}, it is possible to resample $S$ samples from the posterior GMM through the following procedure:
\begin{enumerate}
    \item for $s=1,\dots, S$, $\mathbf{X}^a_{i,s}$, sample the random variable $\ell$ from the discrete distribution defined by the weights $\{w_{i,j,k}\}_{j,k}$,
    \item sample $\mathbf{X}^a_{i,s}$ from the Gaussian $\!N\left(x | \bar{x}^a_{i,\ell}, \*B^a_{i,\ell}\right)$,
\end{enumerate}
enabling samples to be generated from the posterior.
\end{procedure}

\begin{remark}[Arbitrary Sampling of the Posterior]\label{rem:posterior-arbitrary-sampling}
Note, that using~\cref{pro:resampling-procedure} we are able to arbitrarily sample from the posterior distribution. This means that the number of posterior samples $S$ could be significantly larger or significantly smaller than the original number of samples $N$ used to generate said posterior.
\end{remark}

\begin{remark}[Independent and Identically Distributed Samples]
    Note that while we make the convenient assumption that the samples generated by~\cref{pro:resampling-procedure} are iid, this is not actually the case. The parameters of each mode of the GMM are actually functions of the prior samples, and are themselves random variables, and thus introduce a dependence if two samples come from the same mode.
\end{remark}

\begin{remark}[Prior Uniform Weights]
If \cref{pro:resampling-procedure} is utilized to re-sample the particles at every step of the assimilation, then the prior distribution weights in \cref{eq:prior-Gaussian-mixture} are all uniform $u_{i,j} = \frac{1}{N}$ under the assumption of a uniform transition density.
\end{remark}

\begin{remark}[Differences Between the EnGMF and the AGMF]
   Unlike the EnGMF, in the AGMF (see \cite{stordal2011bridging, van2019particle}), instead of resampling like in~\cref{pro:resampling-procedure}, the weights are scaled by a defensive factor towards uniformity,
   \begin{equation}
        w_{i,j,k} = \alpha_i w_{i,j,k} + (1 - \alpha_i)\frac{1}{N},   
   \end{equation}
   with a `defensive factor', $\alpha_i$, which ensures that the weights do not degenerate.
   The new particles are taken to be the means of the candidate posterior distribution~\cref{eq:posterior-Gaussian-mixture}.
   This idea is not explored in this work.
\end{remark}

The posterior GMM~\cref{eq:posterior-Gaussian-mixture} is only a good representation of the exact posterior if the prior GMM assumption~\cref{eq:prior-Gaussian-mixture} is a good approximation of the distribution that originated the particles $\*X_i^b$.
For a finite ensemble $N$ it is possible that the EnGMF analysis is a poor representation of the truth. 
The prior GMM assumption~\cref{eq:prior-Gaussian-mixture}, in the particle limit $N\to\infty$, converges to the exact prior under certain assumption on the covariances, $\*B_{i,j}$, which is a known result from kernel density estimation literature~\cite{silverman2018density}. 
We now show that under some assumptions on the covariances, $\*B_{i,j}$, the posterior GMM~\cref{eq:posterior-Gaussian-mixture} begotten from the EnGMF procedure also converges, in the particle limit, $N\to\infty$, to a distribution obtained from performing exact Bayesian inference.

\begin{theorem}[EnGMF convergence]\label{thm:EnGMF-convergence}
    Assuming the observation distribution is exact~\cref{eq:observation-Gaussian-mixture},
    if the means $\bar{x}^b_{i,j}$ in the estimated prior distribution GMM~\cref{eq:prior-Gaussian-mixture} are samples from the underlying exact distribution with weights $u_{i,j}$, and the prior Kernel covariance matrices tend to zero in the limit of ensemble size, $\lim_{N\to\infty}\*B^b_{i,j} = 0$, then the EnGMF with the resampling procedure~\cref{pro:resampling-procedure} converges to a filter in the class of sequential importance resampling (SIR) filters.
\end{theorem}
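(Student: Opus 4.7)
The plan is to show that each ingredient of the posterior GMM \cref{eq:posterior-Gaussian-mixture} together with the resampling of \cref{pro:resampling-procedure} degenerates, as $\*B^b_{i,j}\to 0$, into the corresponding ingredient of a sequential importance resampling (SIR) filter with the mixture likelihood \cref{eq:observation-Gaussian-mixture}.

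First I would examine the gain-like matrix $\*G_{i,j,k}$ from \cref{eq:posterior-update}. Because each $\*R_{i,k}$ is strictly positive definite, the map $\*B\mapsto \*B\*H_{i,j}^{T}(\*H_{i,j}\*B\*H_{i,j}^T+\*R_{i,k})^{-1}$ is continuous at $\*B=0$ and vanishes there, so $\*G_{i,j,k}\to 0$. This forces $\bar{x}^a_{i,j,k}\to \bar{x}^b_{i,j}=x^b_{i,j}$ and $\*B^a_{i,j,k}=(\*I-\*G_{i,j,k}\*H_{i,j}^T)\*B^b_{i,j}\to 0$. The covariance $\*H_{i,j}\*B^b_{i,j}\*H_{i,j}^T+\*R_{i,k}$ sitting inside the weight normalization also tends to $\*R_{i,k}$, so the unnormalized weights satisfy
\begin{equation*}
w_{i,j,k} \;\longrightarrow\; u_{i,j}\,v_{i,k}\,\!N\!\left(\bar{y}_{i,k}\,\middle|\, \!H(x^b_{i,j}),\,\*R_{i,k}\right),
\end{equation*}
which are exactly the importance weights an SIR filter would assign to particle $x^b_{i,j}$ under the mixture observation density.

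Next I would handle the sampling step. In \cref{pro:resampling-procedure} a component index $\ell=(j,k)$ is first drawn with probability $w_{i,j,k}$, and a state is then drawn from $\!N(\bar{x}^a_{i,\ell},\*B^a_{i,\ell})$; by the previous paragraph this Gaussian converges weakly to the Dirac mass at $x^b_{i,j}$, so the two-stage draw reduces to copying the prior particle $x^b_{i,j}$ with probability $\sum_k w_{i,j,k}$, that is, to multinomial resampling, which is the defining step of an SIR filter. Combined with the hypothesis that the $\bar{x}^b_{i,j}$ are themselves weighted samples from the exact prior and with the exactness of \cref{eq:observation-Gaussian-mixture}, this shows that one EnGMF analysis cycle coincides in the limit with one SIR cycle; an induction over the time index $i$ extends the conclusion to the full filter.

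The hard part will be making the two convergences compatible: the shrinkage $\*B^b_{i,j}\to 0$ and the convergence of the empirical measure $\sum_j u_{i,j}\,\delta_{x^b_{i,j}}$ to the true prior must be controlled simultaneously so that the Bayesian update can be exchanged with the limit. I would address this by observing that, with $\*R_{i,k}$ strictly positive definite and $\!H$ continuous, the map from (prior measure, kernel covariance collection) to the posterior GMM is continuous in the weak topology, so the desired conclusion follows from the standard consistency of kernel density estimators~\cite{silverman2018density} together with continuity of the Bayesian update under weak convergence of the prior.
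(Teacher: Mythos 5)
Your proposal is correct and follows essentially the same route as the paper's proof: collapse the posterior GMM into a weighted empirical measure supported on the prior particles (gain $\to 0$, posterior covariances $\to 0$, weights $\to$ likelihood evaluations under the mixture observation density) and recognize the resampling in \cref{pro:resampling-procedure} as multinomial resampling with SIR importance weights. Your version is in fact more detailed than the paper's own brief sketch, which simply asserts the weak-convergence limits that you derive explicitly, including the limit-exchange issue you flag in your final paragraph, which the paper does not address at all.
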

\begin{proof}
Given the assumptions above, in the limit of ensemble size, $N\to\infty$, the prior distribution GMM converges to the empirical distribution,
\begin{equation}
    p(x^b_i) = \frac{1}{N}\sum_{j=1}^N u_{i,j} \delta_{x^b_i - \bar{x}^b_{i,j}},
\end{equation}
which converges weakly to the underlying prior distribution.
Then as the prior Kernel covariance tends towards zero, the posterior GMM estimate defined by~\cref{eq:posterior-Gaussian-mixture} and \cref{eq:posterior-update} converges to the empirical measure,
\begin{equation}
    p(x^a_i) = \frac{1}{N}\sum_{j=1}^N \left(\sum_{k=1}^M w_{i,j,k}\right) \delta_{x^a_i - \bar{x}^b_{i,j}},
\end{equation}
which converges weakly to the exact posterior distribution.
Then, the EnGMF resampling in \cref{pro:resampling-procedure} makes the EnGMF converge to an SIR filter.
\end{proof}

The rate of convergence in~\cref{thm:EnGMF-convergence},
is determined by the free parameters in the GMM that describes the prior distribution~\cref{eq:prior-Gaussian-mixture}, namely the covariance matrices. For a deeper look into Kernel density estimation and how the covariance matrices determine the strength of the approximation of the exact distribution see~\cite{silverman2018density}. 

\section{Adaptive ensemble Gaussian Mixture Filter}
\label{sec:AEnGMF}

Following the observations provided by~\cref{thm:EnGMF-convergence}, we want to choose covariance matrices $\*B_{i,j}$ found in the prior GMM assumption~\cref{eq:prior-Gaussian-mixture} in an intelligent and adaptive manner such that the convergence properties are satisfied.
We additionally attempt to fulfill a desire useful to the practitioner: that practical convergence is achieved with as small as possible number of particles.

To that end, in this work we explore arbitrary parameterized covariance matrices in the prior GMM~\cref{eq:prior-Gaussian-mixture},
\begin{equation}\label{eq:parameterized-covariance-distribution}
    p(x_i | \theta_i) = \sum_{j=1}^N u_{i,j} \!N\left(x_i | x^b_{i,j}, \*B_{i,j}^b(\theta_i)\right),
\end{equation}
where each covariance $\*B_{i,j}^b(\theta_i)$ is a matrix function of some (small number of) parameters $\theta_i$.

The aim of the parameterization in~\cref{eq:parameterized-covariance-distribution} is to find a set of parameters $\theta_i$ that can both be chosen adaptively at each step, and can ensure that the EnGMF satisfies the convergence properties of~\cref{thm:EnGMF-convergence}. 

We now provide a way by which we can solve for the optimal parameters $\theta_i$ in~\cref{eq:parameterized-covariance-distribution} through the expectation maximization algorithm.

\subsection{Expectation Maximization}
\label{sec:EM}

The expectation maximization (EM) algorithm~\cite{bocquet2020bayesian, bishop2006pattern} finds the set of the parameters $\theta_i$ that maximize the conditional distribution of the parameters given the observations $p(\theta_i | y_i)$, at time index $i$. 

Given some initial set of parameters $\theta^{(0)}_i$, the expectation maximization algorithm proceeds in an iterative fashion in two steps:

The \textit{expectation step},
\begin{equation}\label{eq:expectation-step}
   \mathbb{E}_{x^b_i| y_i, \theta^{(m)}_i} \log p(x^b_i, y_i, \theta_i)
\end{equation}
constructs the function representing the expectation of the joint distribution of the prior state, the observations, and the parameters. 
The joint distribution in~\cref{eq:expectation-step} can be written in terms of the prior~\cref{eq:prior-Gaussian-mixture}, observation~\cref{eq:observation-Gaussian-mixture}, and parameter distributions as,
\begin{equation}\label{eq:full-joint-distribution}
    p(x^b_i, y_i, \theta_i) = p(y_i|x^b_i,\theta_i)p(x^b_i|\theta_i)p(\theta_i),
\end{equation}
and, as the observation GMM~\cref{eq:observation-Gaussian-mixture} is not dependent on the parameters,
\cref{eq:full-joint-distribution} can be simplified to,
\begin{equation}\label{eq:simplified-joint-distribution}
    p(x^b_i, y_i, \theta_i) = p(y_i|x^b_i)p(x^b_i|\theta_i)p(\theta_i),
\end{equation}
where the prior distribution~\cref{eq:parameterized-covariance-distribution}
is parameterized in terms of its covariance~\cref{eq:parameterized-covariance-distribution}, and the parameter distribution
\begin{equation}\label{eq:parameter-distribution}
    p(\theta_i),
\end{equation}
is determined on a case-by-case basis. 

The \textit{maximization step} aims to find the value of the parameters $\theta_i$, that maximize the log joint distribution~\cref{eq:simplified-joint-distribution},
\begin{equation}\label{eq:maximization-step}
    \theta^{(m+1)}_i = \argmax_{\theta_i} \mathbb{E}_{x^b_i|y_i,\theta^{(m)}_i} \left[\log p(x^b_i | \theta_i) + \log p(\theta_i)\right],
\end{equation}
where $\theta^{(m)}_i$ are the parameters from the previous step,  $x^b_i | y_i, \theta^{(m)}_i$ are samples from the posterior distribution~\cref{eq:posterior-Gaussian-mixture} given the previous set of parameters $\theta^{(m)}_i$, and the term $\log p(y_i|x^b_i)$ is constant and thus can be safely ignored due to the fact that it does not influence the optimization problem. 
Recall~\cref{rem:posterior-arbitrary-sampling} that in the EnGMF, it is possible to generate an unlimited number of i.i.d. samples from the posterior distribution, thus the maximization step~\cref{eq:maximization-step} can be computed to an arbitrary level of accuracy, given some reasonable assumptions on the distribution of the sample mean.

\begin{remark}[Invertible Covariances]\label{rem:invertible-covariances}
Note that the prior covariance $p(x^b_i | \theta_i)$ in~\cref{eq:maximization-step} requires that the covariance matrices $\*B_{i,j}^b(\theta_i)$ in~\cref{eq:parameterized-covariance-distribution} are invertible, which is not necessarily required by the standard EnGMF. 
\end{remark}

\subsubsection{Stochastic Optimization}

The maximization step~\cref{eq:maximization-step} requires the solution of a stochastic optimization problem. 
Much of the recent literature on stochastic optimization has been focused on machine learning applications~\cite{aggarwal2018neural}. As the number of parameters in $\theta_i$ is small, it is possible to take advantage of methods that are built for the small parameter size case and that differ from typical machine learning optimization methods.
Thus, in this work we utilize a variant of Newton's method.

We can write the loss in the maximization step~\cref{eq:maximization-step} as,
\begin{equation}\label{eq:loss-function}
    \!L(x_i, \theta_i) = \mathbb{E}_{x^b_i|y_i,\theta_i^{(m)}} \left[\log p(x_i | \theta_i) + \log p(\theta_i)\right],
\end{equation}
where the posterior can be written as the following,
\begin{equation}
    x^{a, (m)}_i = x^b_i|y_i,\theta_i^{(m)},
\end{equation}
which is useful shorthand for the following derivations.

One algorithm for finding the maximum of the loss function~\cref{eq:loss-function} is Newton's method, 
\begin{equation}\label{eq:stochastic-Newtons}
\resizebox{0.91\hsize}{!}{$
    \theta_i^{(m+1, p+1)} = \theta^{(m+1, p)} + \alpha_m \left(\mathbb{E}_{x^{a, (m)}_i}[\nabla^2_\theta \!L(x, \theta_i^{(m+1, p)})]\right)^{-1} \mathbb{E}_{x^{a, (m)}_i}[\nabla_\theta \!L(x, \theta_i^{(m+1, p)}))],
    $}
\end{equation}
where $\alpha_m$ is the step-size (also known as the learning rate in the machine learning community), and the initial parameter value for the algorithm is $\theta_i^{(m+1, 1)} \coloneqq \theta_i^{(m)}$, which is the parameter from the previous maximization step~\cref{eq:maximization-step}. 
As it is challenging to compute the expected values in~\cref{eq:stochastic-Newtons} analytically, some sort of approximation procedure is required.

As this is a stochastic optimization procedure, the two expected values in~\cref{eq:stochastic-Newtons} cannot be calculated with the same random samples, as that would introduce unintended bias and variance into the update. In this work we utilize the sub-sampled version of Newton's method~\cite{roosta2019sub} built specifically to handle this scenario.
In sub-sampled Newton's method, independent samples of $x^{a, (m)}_i$ are used to approximate the Hessian $\mathbb{E}_{x^{a, (m)}_i}[\nabla^2_\theta \!L(x, \theta_i^{(m+1, p)})]$ and the gradient $\mathbb{E}_{x^{a, (m)}_i}[\nabla_\theta \!L(x, \theta_i^{(m+1, p)})]$. If the number of samples used is identical, then the number of samples required is double that of the  stochastic gradient descent (SGD) algorithm which only requires the computation of $\mathbb{E}_{x^{a, (m)}_i}[\nabla_\theta \!L(x, \theta^{(m+1,p)})]$. 
As Newton's method achieves faster convergence than  SGD, it is the authors' belief that for this particular scenario the benefits of this approach outweight the additional costs.

\begin{remark}[Quasi-Newton Methods]
Instead of computing an estimate to the Hessian $\mathbb{E}_{x^{a, (m)}_i}[\nabla^2_\theta \!L(x, \theta_i^{(m+1, p)})]$ at every step, it is possible to only compute the Hessian at the initial step $\mathbb{E}_{x^{a, (m)}_i}[\nabla^2_\theta \!L(x, \theta_i^{(m+1, 1)})]$ and use this approximation for all subsequent steps. This type of computationally efficient computation is a type of Quasi-Newton method~\cite{nocedal1999numerical} that is often used in practical applications.
\end{remark}

\begin{remark}[Alternative Optimization Algorithms]
Alternative stochastic optimization algorithms could also be utilized. The classic stochastic gradient descent algorithm~\cite{robbins1951stochastic} is an alternative which would require a smaller step-size $\alpha_m$. Another alternative is ADAM~\cite{kingma2014adam} which would require to keep track of separate momentum and velocity terms.
\end{remark}

\begin{remark}[Incremental Expectation Maximization]\label{rem:incremental-EM}
If the expectation maximization algorithm is performed online in sequential data assimilation, it is not necessary to perform many steps of either the expectation maximization algorithm, or sub-sampled Newton's method~\cref{eq:stochastic-Newtons}.
In this work we initialize the parameters expectation maximization algorithm~\cref{sec:EM} from the previous time step of the data assimilation algorithm. This can be weakly justified as a type of incremental expectation maximization~\cite{neal1998view}, and in the authors' experience significantly increases the utility of the proposed approach.
\end{remark}

We now discuss several different strategies for parameterizing the kernel covariance~\cref{eq:parameterized-covariance-distribution}.

\subsection{Bandwidth-based covariance}

In kernel density estimation, choosing the optimal covariance matrices has had considerable research interest~\cite{silverman2018density}. And, as discussed in~\cref{sec:background} has a considerable impact on the efficacy of the EnGMF algorithm.

One particular case of the covariance in the prior GMM~\cref{eq:parameterized-covariance-distribution} that is a focus of this work is,
\begin{equation}\label{eq:prior-covariance-with-bandwidth}
   \*B^b_{i,j}(\beta^2_{i,N}) = \beta^2_{i,N}\*P^b_{i,N},\quad 1\leq j \leq N,
\end{equation}
where $\beta^2_{i,N}$ is known as the bandwidth parameter~\cite{silverman2018density}, and 
\begin{equation}\label{eq:empirical-covariance}
    \*P^b_{i,N} = \frac{1}{N-1}\*X^b_i\left(I_N - \frac{1}{N}\1\1^T\right)\*X^{b,T}_i \approx \mathbb{E}\left[\left(x^b_i - \mathbb{E}[(x^b_i]\right)\left(x^b_i - \mathbb{E}[(x^b_i]\right)^T\right],
\end{equation}
is known as the empirical covariance, and is an estimate of the covariance matrix of the prior state $x^b_i$. 
In \cref{eq:prior-covariance-with-bandwidth}, the only parameter is the bandwidth estimate, $\theta_i = \beta^2_{i,N}$. 

\begin{remark}[Stochastic Newton's for the Bandwidth Parameter]
When the sub-sampled version of the stochastic Newton's method~\cref{eq:stochastic-Newtons} is applied to the covariance parameterized by the bandwidth parameter~\cref{eq:prior-covariance-with-bandwidth}, then both the stochastic estimate of the gradient and the stochastic estimate of the Hessian are scalars. This enables the computation of the maximization step~\cref{eq:maximization-step} to be performed with minimal linear system solves. 
\end{remark}

The prior kernel covariance estimate in~\cref{eq:prior-covariance-with-bandwidth} takes advantage of the underlying covariance of the data, and is thus a type of online estimate however, the resulting accuracy of the density estimate is still highly dependent on the bandwidth parameter $\beta^2_{i,N}$.

It is known from~\cite{silverman2018density} that if the underlying exact prior distribution of $x^b$ in \cref{eq:prior-Gaussian-mixture} is Gaussian, that the optimal choice of bandwidth parameter $\beta^2$ in \cref{eq:prior-covariance-with-bandwidth} that minimizes the mean integrated square error is,
\begin{equation}\label{eq:Silverman's-rule}
    \beta^2_{i,N,\text{Gaussian}} = \left(\frac{4}{N(n + 2)}\right)^{\frac{2}{n + 4}},
\end{equation}
which is also known as Silverman's rule of thumb. 

In practice most probability distributions of interest are not Gaussian, and~\cref{eq:Silverman's-rule} can result in a very poor approximation of the underlying density~\cite{silverman2018density}, thus a more refined choice of the bandwidth parameter is required.

\Cref{thm:EnGMF-convergence} showed that a sufficient condition for the convergence of the EnGMF is that the covariance estimate tends towards zero as $N\to\infty$.
We now show a condition on the bandwidth parameter that is sufficient for the EnGMF to converge.

\begin{lemma}\label{lem:shrinking-bandwidth}
Without proof, given the sequence of random variables $\{\beta^2_{i,N}\}_{N = 1}^\infty$ parameterized by the particle amount $N$, a sufficient condition for the covariance estimate \cref{eq:prior-covariance-with-bandwidth},
\begin{equation}
    \*B_{i,N} = \beta^2_{i,N}\*P^b_N,
\end{equation}
to tend towards zero in the limit of particle number,
\begin{equation}
    \lim_{N\to\infty}\*B_{i,N} = \*0,
\end{equation}
is that the bandwidth parameter tends towards zero,
\begin{equation}
    \lim_{N\to\infty}\beta^2_{i,N} = 0,
\end{equation}
in the limit of particle number.
\end{lemma}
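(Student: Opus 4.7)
The plan is to decompose the claim into two independent observations that combine via a standard bounded-times-null-sequence argument. First, I would establish that the empirical covariance $\*P^b_{i,N}$ defined in \cref{eq:empirical-covariance} remains almost surely bounded in operator norm for large $N$. Under the implicit assumption that the exact prior distribution at time $i$ has finite second moments, a strong law of large numbers applied entry-wise (or equivalently in Frobenius norm) to \cref{eq:empirical-covariance} yields $\*P^b_{i,N} \to \*P^b_i$ almost surely, where $\*P^b_i$ is the true prior covariance matrix. Consequently there exists an almost surely finite random constant $C$ such that $\norm{\*P^b_{i,N}} \leq C$ for all sufficiently large $N$.

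Second, combining this uniform boundedness with the hypothesis $\beta^2_{i,N} \to 0$ gives
\begin{equation*}
    \norm{\*B_{i,N}} = \beta^2_{i,N}\, \norm{\*P^b_{i,N}} \leq C\, \beta^2_{i,N} \to 0,
\end{equation*}
which is the desired conclusion that $\*B_{i,N} \to \*0$.

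The main subtlety is specifying the mode of convergence, since the statement involves random sequences and the lemma does not make this explicit. In the almost-sure setting the argument above is immediate, as both the LLN step and the bounded-times-null product preserve almost-sure convergence. For convergence in probability or in $L^p$, one would instead invoke a Slutsky-type theorem, or apply Markov's inequality together with the almost-sure boundedness of $\*P^b_{i,N}$, to transfer the null behavior of $\beta^2_{i,N}$ to the product. I would present the almost-sure version as the core proof and mention the adaptations required for weaker modes as a brief remark, since the LLN and the product-of-limits step are both entirely standard and the interesting content of the lemma is really just the observation that $\*P^b_{i,N}$ does not blow up.
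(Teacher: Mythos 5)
The paper states this lemma explicitly \emph{without proof} (the statement itself begins ``Without proof''), so there is no paper argument to compare yours against; what you have written supplies the argument the authors chose to omit. Your proof is correct and is the natural one: the decomposition into (i) almost-sure boundedness of the empirical covariance $\*P^b_{i,N}$ via the strong law of large numbers and (ii) the bounded-times-null product is exactly the standard route. Two points in your write-up are genuinely valuable beyond what the paper says. First, you make explicit that the lemma is \emph{not} true unconditionally: if the underlying prior has infinite second moments, $\norm{\*P^b_{i,N}}$ can diverge with $N$ and the product $\beta^2_{i,N}\*P^b_{i,N}$ need not vanish, so finite second moments (or some equivalent boundedness hypothesis) is a genuinely needed assumption that the paper leaves implicit. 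Second, your remark about the mode of convergence addresses a real ambiguity in the statement: the paper treats $\beta^2_{i,N}$ in this lemma as effectively deterministic (the follow-up text reads ``\cref{lem:shrinking-bandwidth} showed that when $\beta^2_{i,N}$ is a constant\dots''), deferring the genuinely random case to \cref{thm:shrinking-bandwidth}, where convergence in distribution is used; your almost-sure formulation cleanly covers the deterministic case and slightly more. One caveat worth acknowledging if you were to formalize further: the SLLN step presumes the ensemble members are iid draws from the prior, which the paper itself concedes (in its remark on iid samples) is only approximately true after resampling, since mixture parameters are functions of the previous ensemble; this does not affect the lemma as intended but is the one place where ``entirely standard'' hides a modeling assumption.
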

\begin{corollary}
The sequence $\{\beta^2_{i,N,\text{Gaussian}}\}_{N=1}^\infty$ of bandwidth parameters defined by Silverman's rule of thumb~\cref{eq:Silverman's-rule} satisfies the conditions of \cref{lem:shrinking-bandwidth}.
\end{corollary}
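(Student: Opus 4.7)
The plan is very short because Silverman's rule gives an explicit closed-form expression for $\beta^2_{i,N,\text{Gaussian}}$, so the corollary reduces to an elementary limit computation. First I would observe that the state dimension $n$ is a fixed problem parameter (it does not depend on the ensemble size $N$), so the exponent $\frac{2}{n+4}$ is a strictly positive constant throughout the sequence indexed by $N$. Similarly, the constant $n+2$ in the denominator is fixed.

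Next I would note that the base of the expression, $\frac{4}{N(n+2)}$, is a positive real number for every $N \geq 1$, and satisfies
\begin{equation}
    \lim_{N\to\infty} \frac{4}{N(n+2)} = 0.
\end{equation}
Since the map $t \mapsto t^{2/(n+4)}$ is continuous on $[0,\infty)$ and takes the value $0$ at $t=0$ (because the exponent is positive), I can pass the limit through the exponentiation:
\begin{equation}
    \lim_{N\to\infty} \beta^2_{i,N,\text{Gaussian}} = \lim_{N\to\infty}\left(\frac{4}{N(n+2)}\right)^{\frac{2}{n+4}} = 0^{\frac{2}{n+4}} = 0.
\end{equation}
This is exactly the sufficient condition required by \cref{lem:shrinking-bandwidth}, which completes the argument.

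There is essentially no obstacle here: the rule of thumb is deterministic (not random), the exponent is a fixed positive constant, and the base collapses to zero at rate $1/N$. The only place where one might pause is confirming that $n$ is treated as fixed rather than scaling with $N$; this is the standard convention in the density-estimation literature cited above and is consistent with the way $n$ enters the model~\cref{eq:model}. No probabilistic machinery is needed because $\beta^2_{i,N,\text{Gaussian}}$ is deterministic, so the convergence to zero holds trivially in every reasonable mode (almost surely, in probability, and in $L^p$).
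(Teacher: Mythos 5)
Your proof is correct and matches what the paper intends: the paper states this corollary without any proof, treating it as immediate, and your elementary limit computation --- fixed dimension $n$, base $\frac{4}{N(n+2)}\to 0$, positive constant exponent $\frac{2}{n+4}$, continuity of $t\mapsto t^{2/(n+4)}$ at zero --- is exactly the argument being left implicit. Your side remark that the deterministic sequence trivially satisfies the convergence in every mode is also apt, since \cref{lem:shrinking-bandwidth} only requires the deterministic limit $\lim_{N\to\infty}\beta^2_{i,N}=0$.
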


\Cref{lem:shrinking-bandwidth} showed that when $\beta^2_{i,N}$ is a constant, and converges to zero in the limit of particle number $N\to\infty$, the EnGMF converges. However, as we have uncertainty about the bandwidth parameter, it is natural to think about it as a random variable with some distribution. Thus an important choice is that of the distribution of the bandwidth parameter. Care must be taken to ensure that this choice is sufficient to make the resulting algorithm converge.

We provide a sufficient condition on the distribution of the bandwidth parameter $\beta^2_{i,N}$ from \cref{eq:prior-covariance-with-bandwidth}, as an extension of \cref{thm:EnGMF-convergence}. 
We therefore extend~\cref{lem:shrinking-bandwidth} to bandwidth parameters that are random variables with some prior distribution in the expectation maximization algorithm.

\begin{theorem}\label{thm:shrinking-bandwidth}
Given the sequence of random variables $\{\beta^2_{i,N}\}_{N = 1}^\infty$ with a sequence of distributions $\{p(\beta^2_{i,N})\}_{N = 1}^\infty$ parameterized by the particle amount $N$, a sufficient condition for the covariance estimate \cref{eq:prior-covariance-with-bandwidth},
\begin{equation}
    \*B_N = \beta^2_{i,N}\*P^b_N,
\end{equation}
to tend towards zero in distribution in the limit of particle number,
\begin{equation}
    \*B_N \xrightarrow[]{D} \*0,
\end{equation}
is that the distribution of the bandwidth parameter tends towards the delta distribution around zero,
\begin{equation}
    \lim_{N\to\infty} p(\beta^2_{i,N}) = \delta_{0},
\end{equation}
ensuring that $\beta^2_{i,N}$ almost surely becomes $0$.
\end{theorem}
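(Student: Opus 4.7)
The plan is to reduce the statement to \cref{lem:shrinking-bandwidth} via a standard Slutsky-type argument that decouples the random bandwidth from the empirical covariance. First, I would observe that the hypothesis $\lim_{N\to\infty} p(\beta^2_{i,N}) = \delta_0$ means that $\beta^2_{i,N}$ converges in distribution to the constant $0$; because the limit is a constant, this automatically upgrades to convergence in probability, which is what the closing phrase about ``almost surely becomes $0$'' is encoding.

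Second, I would handle the empirical covariance $\*P^b_N$ defined in~\cref{eq:empirical-covariance}. Assuming the underlying prior distribution has finite second moments, the strong law of large numbers gives $\*P^b_N \to \*P^b$ almost surely for some fixed finite positive semidefinite matrix $\*P^b$, and in particular $\*P^b_N$ is stochastically bounded. This is precisely the condition under which the deterministic version~\cref{lem:shrinking-bandwidth} was applied: the empirical covariance contributes only a bounded factor, so that shrinking the scalar $\beta^2_{i,N}$ shrinks the whole matrix.

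Third, the conclusion follows by combining these two facts entry-wise via Slutsky's theorem (or, equivalently, the continuous mapping theorem applied to scalar-matrix multiplication): the product of a sequence that is $o_P(1)$ with a sequence that is $O_P(1)$ is itself $o_P(1)$. Hence $\*B_N = \beta^2_{i,N}\*P^b_N \to \*0$ in probability, and convergence in probability to a constant is equivalent to convergence in distribution to that constant, yielding $\*B_N \xrightarrow[]{D} \*0$ as claimed. Finally, I would note that once this weak-limit statement holds, the EnGMF convergence result of \cref{thm:EnGMF-convergence} can be reapplied in a Fubini-style averaging over the bandwidth distribution.

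The main obstacle is not analytical but hypothetical: the theorem as stated does not declare any integrability assumption on the underlying prior, and some such assumption (finite second moments being the mildest sufficient one) is implicitly needed to keep $\*P^b_N$ bounded. A secondary subtlety worth making explicit is the passage from ``distribution tends to $\delta_0$'' to ``$\beta^2_{i,N}$ converges in probability to $0$''; the equivalence is standard but should be stated cleanly, because the theorem itself hedges between weak convergence of the law and the almost-sure behaviour of the random scalar.
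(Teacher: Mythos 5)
Your proof is correct, and it is genuinely different from --- and considerably more careful than --- the argument the paper gives. The paper's entire proof is one sentence: it says that if $p(\beta^2_{i,N})$ collapses to $\delta_0$, then the solution of the EM maximization step \cref{eq:maximization-step} almost surely becomes the constant $0$ (the collapsing prior term $\log p(\theta_i)$ dominates the objective), ``as required.'' In other words, the paper routes the hypothesis through the algorithm itself --- the EM-chosen bandwidth is forced to zero --- and then implicitly leans on \cref{lem:shrinking-bandwidth} to conclude, never mentioning the empirical covariance factor $\*P^b_N$ at all. You instead treat the statement as a pure probability claim: convergence in distribution to the constant $0$ upgrades to convergence in probability, the SLLN (under finite second moments) makes $\*P^b_N$ stochastically bounded, and Slutsky / the $o_P(1)\cdot O_P(1)=o_P(1)$ calculus gives $\*B_N \xrightarrow{D} \*0$; note this step is valid even though the EM-chosen $\beta^2_{i,N}$ and $\*P^b_N$ are dependent, since Slutsky-type products require no independence when one factor tends to a constant. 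Your approach buys rigor and explicitness: you surface the integrability assumption on the prior that the paper silently needs for $\*P^b_N$ to stay bounded, and you correctly weaken the paper's loose ``almost surely becomes $0$'' to the convergence-in-probability statement that the hypothesis actually delivers. What the paper's approach buys is relevance to the method: it identifies \emph{why} the hypothesis holds for the AEnGMF specifically (the maximizer of a posterior whose prior collapses to $\delta_0$ is itself forced to $0$), which your distribution-level argument takes as given. The two are complementary; a complete treatment would use the paper's observation to verify the hypothesis for the EM output and your Slutsky argument to draw the matrix-level conclusion. Your closing Fubini remark about re-running \cref{thm:EnGMF-convergence} is inessential to the claim as stated and could be dropped.
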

\begin{proof}
If the distribution of $\beta^2_{i,N}$ converges to  $\delta_0$, then the solution to the maximization step in the EM algorithm~\cref{eq:maximization-step} almost surely becomes a constant, namely that $\beta^2_{i,N}\xrightarrow[]{\text{a.s.}} 0$, as required.
\end{proof}

\subsubsection{Choosing the bandwidth distribution}
\label{sec:bandwidth-distribution-choice}

One way in which the conditions of \cref{thm:shrinking-bandwidth} could be satisfied is through an intelligent choice of the probability distribution of the bandwidth parameter $p(\beta^2_{i,N})$.

A common choice in the literature, the principal of maximum entropy (PME)~\cite{jaynes2003probability} could be used to find a good candidate for this distribution.
If we assume that the expected value of the bandwidth, $\beta^2_{i,N}$, is Silverman's rule of thumb~\cref{eq:Silverman's-rule}, and we have no other information available, then the distribution that satisfies the PME is the exponential distribution,
\begin{equation}\label{eq:exponential-distribution}
    p(\beta^2_{i,N}) = \beta^{-2}_{i,N,\text{Gaussian}}e^{-\beta^{-2}_{i,N,\text{Gaussian}}\beta^2_{i,N}},
\end{equation}
this distribution, however, always has a single mode at zero, thus, from the authors' experience, is ill-suited for use in expectation maximization.

It is possible to perform some slight-of-hand in order to make this assumption more tractable. It is more efficient to look at $\beta_{i,N}$, the square-root of the bandwidth parameter. If~\cref{eq:exponential-distribution} is the distribution of $\beta^2_{i,N}$, then $\beta_{i,N}$ is distributed according to
\begin{equation}\label{eq:Rayleigh-distribution}
    p(\beta_{i,N}) = 2  \beta^{-2}_{i,N,\text{Gaussian}}  \beta_{i,N} e^{-\beta^{-2}_{i,N,\text{Gaussian}}\beta^2_{i,N}}
\end{equation}
which is the Rayleigh distribution~\cite{papoulis1965probability} with known mode $2^{-1/2}\beta_{i,N,\text{Gaussian}}$. We assume this Rayleigh distribution~\cref{eq:Rayleigh-distribution} on the bandwidth parameter for the remainder of this paper.

It is also possible to assume a more general distribution around $\beta^2_{i,N}$, such as a gamma distribution, though this choice would introduce another free parameter into the algorithm; an undesirable outcome.

While the parameterized covariance in \cref{eq:prior-covariance-with-bandwidth} is well-studied, it has a few limitations that prevent it from being used in high-dimensional inference, chief along those being the fact that the covariance estimate in~\cref{eq:empirical-covariance} can potentially be low-rank, and thus generate a covariance that is not invertible~\cref{rem:invertible-covariances}, thus we can introduce covariance matrix estimates that have extra parameters in order to mitigate this issue.

\subsection{Covariance Shrinkage Estimates}

Covariance shrinkage~\cite{chen2009shrinkage,chen2010shrinkage,chen2011robust,chen2012shrinkage,ledoit2004well} aims to use extra prior information about the covariance of $x^b_i$ in~\cref{eq:prior-Gaussian-mixture} in order to have a more accurate covariance estimate in the case when the number of samples is smaller than the dimension of the dynamical system $N < n$. Covariance shrinkage methods have previously been employed for ensemble data assimilation~\cite{nino2015ensemble,popov2020stochastic} and for regularization in particle filtering~\cite{popov2022stochastic}.

Assume that we have prior information about the covariance structure of $x^b_i$ in the form of a `target' covariance matrix $\*T_i$.
The covariance shrinkage estimate to the covariance, scaled by the bandwidth, is given by,
\begin{equation}\label{eq:shrinkage-estimate}
    \*B^b_{i,j} = \beta^2_{i,N}\left[ \gamma_i\mu_i\*T_i + (1 - \gamma_i) \*P^b_i\right]
\end{equation}
where,
\begin{equation}\label{eq:shrinkage-mu-C}
    \mu_i = n^{-1}\tr\*C_i,\quad \*C_i = \*T_i^{-\frac{1}{2}}\*P^b_i\*T_i^{-\frac{1}{2}}
\end{equation}
is a rescaling factor,
and $\gamma_i$ is the shrinkage factor, which we treat as a parameter.

Under Gaussian assumptions on the samples, $x^b_{i,j}$, a good known shrinkage factor is,
\begin{equation}\label{eq:gamma-RBLW}
\begin{aligned}
    \gamma_{i,\text{RBLW}} &= \min\left[\frac{N-2}{N(N+2)} + \frac{(n+1)N - 2}{N(N + 2)(n - 1) \hat{U}_i} , 1\right],\\ \hat{U}_i &= \frac{1}{n - 1}\left(\frac{n\tr \*C^2_i}{\tr^2\*C_i} - 1\right)
\end{aligned}
\end{equation}
called the Rao-Blackwell Ledoit-Wolf estimator~\cite{chen2009shrinkage}.

One possible choice of the target matrix $\*T_i$ that does not require any prior knowledge is the diagonal of the empirical covariance~\cref{eq:empirical-covariance},
\begin{equation}\label{eq:target-is-diagonal}
    \*T_i = \bdiag\*P^b_i,
\end{equation}
where the notation of $\bdiag$ is introduced to signify the matrix consisting of only the diagonal of the subsequent term.
Observe that,
\begin{equation}
    \tr\left[\left(\bdiag\*P^b_i\right)^{-\frac{1}{2}} \*P^b_i\left(\bdiag\*P^b_i\right)^{-\frac{1}{2}}\right] = n,
\end{equation}
therefore when the target matrix is defined by~\cref{eq:target-is-diagonal}, $\mu_i = 1$, and calculating $\*C_i$, from \cref{eq:shrinkage-mu-C}, only need to be performed when calculation $\gamma_{i,\text{RBLW}}$ in~\cref{eq:gamma-RBLW}.

\begin{remark}[On $p(\gamma_i)$]
A commonly made assumption is that parameters are independently distributed, therefore the distribution of $p(\beta^2_{i,N})$ can be chosen independently of the distribution $p(\gamma_i)$.
As there are no requirements that the optimal $\gamma_i$ is dependent on ensemble size, it is a natural choice to assume a uniform likelihood,
\begin{equation}
    p(\gamma_i) \propto 1, 
\end{equation}
which is a typical assumption  in parameter estimation~\cite{bocquet2020bayesian}.
\end{remark}

\subsection{Covariance Localization}

In the geosciences, states usually have some sort of innate spatial structure. State variables that are spatially far apart are generally more weakly correlated than states that are closer together. Taking advantage of this fact, covariance localization~\cite{asch2016data} is a matrix tapering technique which aims to reduce the impact of spurious correlations caused by undersampled $(N \ll n)$ covariance matrix estimates.

In this work we focus on what is known as the B-localization methodology, and combine it with the bandwidth scaling~\cref{eq:prior-covariance-with-bandwidth} in the following manner,
\begin{equation}\label{eq:B-localization-adaptive-estimate}
    \*B^b_{i,j} = \beta^2_{i} \left(\rho(r_i) \circ \*P^b_{i}\right),
\end{equation}
where the matrix $\rho(r_i)$ contains a set of decorrelation variables parameterized by the localization radius $r_i$, and $\circ$ is the element-wise Schur product.

A common choice for $\rho$ is known as Gaussian localization,
\begin{equation}
    \rho(r_i)_{\ell,q} = e^{-\frac{1}{2}\frac{d(\ell,q)^2}{r_i^2}},
\end{equation}
where $d(\ell,q)$ represents the spatial distance between the variables at index $\ell$ and index $q$.

\begin{remark}[Choice of Localization Radius $r_i$]\label{rem:localization-radius-choice}
The choice of localization radius $r_i$ in~\cref{eq:B-localization-adaptive-estimate} can be informed by the temporal covariance of the model of interest if the model of interest is Ergodic, however in practice, the best localization radius is almost always determined empirically.  
\end{remark}

Adaptive-in-time choices for $r_i$ have been explored for the ensemble Kalman filter in~\cite{popov2019bayesian}.

\subsection{Practical Implementation of the AEnGMF}

\begin{figure}
    \centering
    \begin{tikzpicture}
        \node[align=center] at (-1.8,2) {Candidate\\Prior};
        \node (A) at (0,0) {
        \begin{axis}[xshift=-5cm,
          axis x line=none,
          axis y line=none,
          domain=-6:18,
          samples=100,
          xticklabels=\empty,
          width=6cm, height=3cm,
          every axis plot/.append style={line width=2pt, mark size=3.5pt},
        ]
        \addplot [tolblue] {exp(-(x^2)/4)};
        \addplot [tolred] {exp(-((x-6)^2)/4)};
        \addplot [tolyellow] {exp(-((x-12)^2)/4)};
        \end{axis}};
        \node[align=center] at (3.2,2) {Candidate\\Posterior};
        \node (B) at (1,0) {
        \begin{axis}[
          axis x line=none,
          axis y line=none,
          domain=-6:14,
          samples=100,
          xticklabels=\empty,
          width=4cm, height=3cm,
          every axis plot/.append style={line width=2pt, mark size=3.5pt},
        ]
        \addplot [tolblue] {exp(-(x^2)/4)};
        \addplot [tolred] {exp(-((x-4)^2)/4)};
        \addplot [tolyellow] {exp(-((x-8)^2)/4)};
        \end{axis}};
        \draw[->, line width=2pt] (0,0.5) -- (2,0.5) node[midway,above] {EnGMF};
        \draw[->,line width=2pt] (3.2,-0.1) -- (3.2,-1) -- (-1.8,-1) node[midway,below] {Expectation Maximization} -- (-1.8,-0.1);
    \end{tikzpicture}
    \caption{An illustration of the outer loop of the AEnGMF algorithm. In this example the candidate prior is made up of three distinct Gaussian modes which are plotted separately. The candidate prior is transformed into the candidate posterior through the standard EnGMF update~\cref{eq:posterior-update}. Next the expectation maximization algorithm~\cref{eq:expectation-step,eq:maximization-step} is performed, and a new candidate prior is obtained. This procedure repeats until some desired level of convergence is achieved.}
    \label{fig:AEnGMF-outer-loop-illustration}
\end{figure}

\begin{algorithm}[t]
\hspace*{\algorithmicindent} \textbf{Input} Initial ensemble $\*X^a_0$, initial estimate for the parameters $\theta_0$, outer loop iteration count $M$, inner loop iteration count $P$, learning rate $\alpha$, and number of internal samples $S$.\\
\begin{algorithmic}
\For{$i = 1, \dots$}
\State{\% Propagate the ensemble forward in time through the model}
\State{$\*X_i^b = \!M(\*X^a_{i-1})$}
\State{\% Initialize the parameters $\theta$ to the parameters from the previous step}
\State{$\theta_i^{(1,1)}\coloneqq \theta_{i-1}$}
\State{\% Perform the expectation maximization loop $M$ times }
    \For{$m = 1,\dots, M$}
        \State{\% Construct loss function}
        \State{$\!L(x, \theta) \coloneqq \mathbb{E}_{x^b|y,\theta_i^{(m)}} \left[\log p(x | \theta) + \log p(\theta)\right]$}
        \State{\% Initialize the inner loop $\theta$ parameter}
        \State{$\theta_i^{(m+1, 1)} \coloneqq \theta_i^{(m, P+1)}$}
        \State{\% Perform $P$ steps of subsampled Newton's method.}
        \For{$p = 1,\dots, P$}
            \State{\% Sample $S$ particles from the candidate posterior.}
            \State{$\*X^a \sim_{(S)} \pi(x | y, \theta_i^{(m)})$}
            \State{\% Compute the loss gradient}
            \State{$g\coloneqq \mathbb{E}_{\*X^a}[\nabla_{\theta} \!L(X, \theta_i^{(m+1, p)})]$}
            \State{\% Similarly, compute sample Hessian using different samples}
            \State{$\*X^a \sim_{(S)} \pi(x | y, \theta_i^{(m)})$}
            \State{$H \coloneqq \mathbb{E}_{\*X^a}[\nabla^2_{\theta} \!L(X, \theta_i^{(m+1, p)})]$}
            \State{\% Compute new estimate of the parameters}
            \State{$\theta_i^{(m+1,p+1)} \coloneqq \theta_i + \alpha H^{-1} g$}
        \EndFor
        \State{\% Set the current $\theta$ parameter}
        \State{$\theta_i^{(m+1)} \coloneqq \theta_i^{(m+1,P+1)}$}
    \EndFor
    \State{\% Set the $\theta$ parameter for the current time index}
    \State{$\theta_i \coloneqq \theta_i^{(M+1)}$}
    \State{\% Sample a new ensemble of $N$ particles with new parameters $\theta_i$}
    \State{$\*X^a_i \sim_{(N)} \pi(x | y, \theta_i)$}
\EndFor

\end{algorithmic}
\caption{The Adaptive Ensemble Gaussian Mixture Filter}
\label{alg:AEnGMF}
\end{algorithm}

We are now able to combine all the elements presented in this section to fully describe the inner-workings of the adaptive ensemble Gaussian mixture filter (AEnGMF).
The AEnGMF operates as follows. First a choice of parameterized covariance is made by the user. This choice determines the parameters that are optimized for. Next a choice of parameter distribution is required. In this work the bandwidth parameter is assumed to be distributed according to the Rayleigh distribution~\cref{sec:bandwidth-distribution-choice}, and the rest of the parameters are assumed to be proportional to one, thus of no additional consequence.

At each step of the algorithm, the previous choice of covariance parameters is carried over, $\theta_i^{(1,1)} \coloneqq \theta_{i-1}$. This choice from~\cref{rem:incremental-EM} is motivated by incremental approaches to expectation maximization, and lends itself particularly well to parameterized covariances do not depend on their parameters changing a lot from step to step.

Next, $M$ iterations of the expectation-maximization algorithm are performed. The expectation maximization algorithm can be treated as the `outer-loop'~\cite{nocedal1999numerical} in this optimization procedure.

The cost function is solved using $P$ loops of sub-sampled Newton's method~\cref{eq:stochastic-Newtons} with a constant learning-rate $\alpha$ making this the `inner-loop' algorithm.
As it is possible to sample from the posterior arbitrarily~\cref{rem:posterior-arbitrary-sampling}, the gradient and Hessian calculations can be performed using a different number of samples, $S$, than that of the number of particles $N$. Specifically, the gradient is computed using $S$ samples from the candidate posterior, and the Hessian is computed using $S$ separate samples from the candidate posterior, for a total of $2S$ samples.

As the AEnGMF is a particle filter, resampling of $N$ particles is performed at the end of the algorithm with the EnGMF resampling procedure~\cref{pro:resampling-procedure}.

The outer loop of the algorithm is illustrated in~\cref{fig:AEnGMF-outer-loop-illustration}, and a detailed step-by-step look at the algorithm can be seen in~\cref{alg:AEnGMF}.

\begin{remark}[Choosing $M$, $P$, $\alpha$, and $S$]\label{rem:choosing-practical-parameters}
In the author's experience, it is much more advantageous to perform multiple iterations of the expectation maximization algorithm than that of sub-sampled Newton's method, thus it is advantageous to take $M \geq P$. It is also advantageous to oversample sample the gradient and Hessian, thus $S \geq N$. 
By far the hardest choice to make is that of the learning-rate $\alpha$. A learning rate that is too large ($\alpha \approx 1$) could cause the algorithm to become unstable and choose bad parameters $\theta$. A learning rate that is too small ($\alpha\to 0$) could lead to parameters that react poorly to the changing conditions of the states. From the practitioner's point of view, this is by far the most important parameter to choose correctly.
\end{remark}

\begin{remark}[Considerations for the High-dimensional Setting]
There are many considerations to be made for getting the AEnGMF to work in the high-dimensional setting.
First is that the computation of the covariance cannot be made explicitly. This can be resolved by utilizing a covariance estimate that does not have to be explicitly computed like that of the shrinkage estimate in~\cref{eq:shrinkage-estimate}. 
Matrix inverse vector products of~\cref{eq:shrinkage-estimate} can also be computed without explicit computation of the entire matrix.
The normalizing factor issue in \cref{rem:normalizing-factor} can also be mitigated by this covariance matrix estimate. Another problem is the resampling procedure in~\cref{pro:resampling-procedure}, which requires the computation of matrix square root vector products. Methods such as those proposed in~\cite{allen2000numerical,guttel2013rational,chow2014preconditioned} could be utilized to solve this issue, though this is still an open problem.
\end{remark}

\section{Numerical Experiments}
\label{sec:numerical-experiments}

The aim of the numerical experiments is first to demonstrate the viability and convergence of the proposed AEnGMF on small-scale problem, and secondly to demonstrate the more complicated covariance parameterization approaches on a larger-scale problem.

\subsection{Lorenz '63}

\pgfplotsset{clean/.style={axis lines*=left,
        axis on top=true,
        axis x line shift=0.0em,
        axis y line shift=0.75em,
        every tick/.style={black, thick},
        axis line style = ultra thick,
        tick align=outside,
        clip=false,
        major tick length=4pt}}

\begin{figure}[t]
    \centering
    \begin{tikzpicture}
    \begin{semilogxaxis}[clean,
        cycle list name=tol,
        xmode=log,
        log ticks with fixed point,
        xtick=data,
        table/col sep=comma,
        xmin = 23,
        xmax = 550,
        ymin = 2.65,
        ymax = 6.75,
        xlabel = {Number of Particles (N)},
        ylabel = {Mean Spatio-temporal RMSE},
        every axis plot/.append style={line width=2pt, mark size=3.5pt},
        legend style={at={(1,0.85)},anchor=center},
        legend cell align={left}]
    \addplot[color=black!35,forget plot, dotted] table [x=N,y expr=4.71433, mark=none]{data/L63wOptimalScaling.csv}  node[pos=1, right] {\,\,EnKF ($N\to\infty)$};
    \addplot[color=black!35,forget plot,dotted] table [x=N,y expr=2.81689, mark=none]{data/L63wOptimalScaling.csv}  node[pos=1, right] {\,\,SIR ($N\to\infty)$};
    \addplot table [x=N, y=EnGMFEM, col sep=comma] {data/L63wOptimalScaling.csv};
    \addlegendentry{AEnGMF};
    \addplot table [x=N, y=EnGMFb1, col sep=comma] {data/L63wOptimalScaling.csv};
    \addlegendentry{EnGMF ($\beta^2 = \beta^2_{\text{Gaussian}}$)};
    \addplot table [x=N, y=EnGMFb03, col sep=comma] {data/L63wOptimalScaling.csv};
    \addlegendentry{EnGMF ($\beta^2 = 0.3\beta^2_{\text{Gaussian}}$)};
    \pgfplotsset{cycle list shift=+1};
    \addplot table [x=N, y=POEnKF, col sep=comma] {data/L63wOptimalScaling.csv};
    \addlegendentry{EnKF};
    \end{semilogxaxis}
    \end{tikzpicture}
    \caption{Simulation results for the Lorenz '63 equations for four different data assimilation algorithms. The blue line with circular marks represents the AEnGMF, the red line with square marks represents the canonical EnGMF with Silverman's rule of thumb, the yellow line with x marks represents the EnGMF with a scaled Silverman's rule of thumb, and the Raspberry line with  diamond marks represents the EnKF. Two baseline lines, running the EnKF and a particle filter (SIR) for a high particle number are also provided to provide theoretical bounds.}
    \label{fig:lorenz63-experiment}
\end{figure}
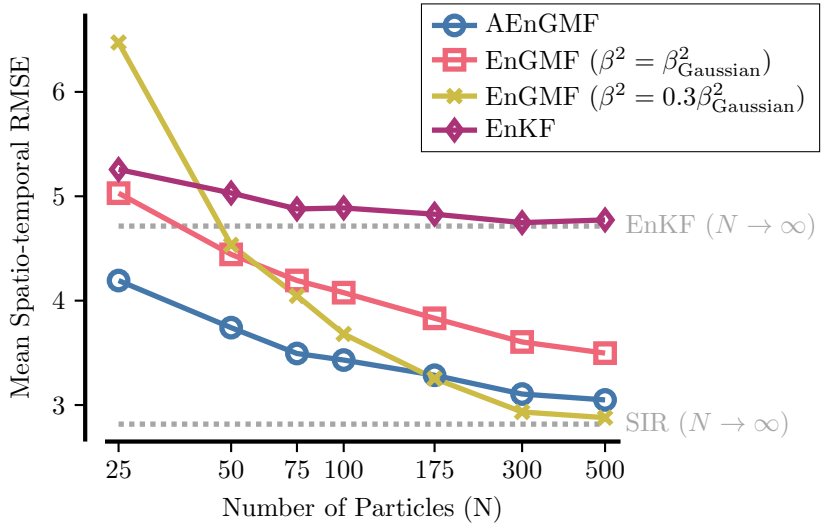

With the first set of experiments we aim to look at a highly non-linear system with a non-linear observation operator. We focus on the standard EnGMF case of the Kernel covariance parameterized by the bandwidth parameter~\cref{eq:prior-covariance-with-bandwidth}.

We take the the 3-variable Lorenz '63 equations~\cite{lorenz1963deterministic},
\begin{equation}\label{eq:lorenz63-equations}
\begin{aligned}
    x' &= \sigma(y - x),\\
    y' &= x(\rho - z),\\
    z' &= xy-\beta z,
\end{aligned}
\end{equation}
with canonical parameters $\sigma = 10$, $\rho = 28$, and $\beta = \frac{8}{3}$. The time between assimilations is taken to be $\Delta t = 0.5$, which allows the system enough time to evolve in a highly non-linear manner.

It is known~\cite{hirsch2012differential} that the system~\cref{eq:lorenz63-equations} has three critical points, one at the origin, and in the center of each of the butterfly wings. The first one of the critical points,
\begin{equation}\label{eq:lorenz-3-critical-point}
\begin{aligned}
        x_c &= \sqrt{\beta(\rho - 1)},\\
        y_c &= \sqrt{\beta(\rho - 1)},\\
        z_c &= \rho - 1,
\end{aligned}
\end{equation}
defines the center of one of the wings of the butterfly, with the other center being $(-x_c, -y_c, z_c)$ and the origin being $(0,0,0)$.
For the non-linear observation operator we measure the distance from the critical point~\cref{eq:lorenz-3-critical-point} to the point being measured, %
\begin{equation}
\begin{aligned}
\!H\left(x, y, z\right) = \sqrt{(x - x_c)^2 + (y - y_c)^2 + (z - z_c)^2},
\end{aligned}
\end{equation}
as a scalar observation, with Gaussian error with an error variance of $\*R = 1$. 

The goal of this experiment is to show that the various variants of the EnGMF are superior to the ensemble Kalman filter (EnKF) and converge to exact Bayesian inference in the limit of particle number ($N\to\infty$). We therefore calculate two reference boundaries for this problem, one using the EnKF for a large ensemble size (N = 1000) and for the sequential importance resampling (SIR) particle filter with a large number of particles (N = 1000), specifically the variant found in~\cite{reich2015probabilistic}.

It is known in the literature~\cite{silverman2018density, janssen1995scale} that Silverman's rule-of-thumb is usually an over-estimate of the optimal bandwidth term. We thus attempt to find a scaling factor $0 < s < 1$ such that the bandwidth parameter defined by the product,
\begin{equation}
    \beta^2_{i,N} = s \beta^2_{\text{Gaussian}},
\end{equation}
would produce the minimal error for our choice of number of particles. From a quick parameter sweep, it was determined that $s = 0.3$ provides a good factor, that is optimal for a high number of particles.

Thus, we run four different algorithms, the EnKF, the AEnGMF, the EnGMF with Silverman's rule of thumb, and the EnGMF with Silverman's rule of thumb scaled by $s = 0.3$, for this model setup for various numbers of particles ranging from $N = 25$ to $N = 500$.

All experiments were run for four independent initial ensembles for $5500$ assimilation steps with the first $500$ discarded for spinup, meaning that the first $500$ steps do not count into the error calculations to let the filter reach a steady state. The mean of the spatio-temporal RMSE,
\begin{equation}\label{eq:RMSE}
    \text{RMSE}(\bar{x}, x^t) = \sqrt{\frac{1}{nT}\sum_{i=1}^T\sum_{l=1}^n \left(\bar{x}_{i,l} - x^t_{i,l}\right)^2},
\end{equation}
is calculated between the statistical mean $\bar{x}$ and the truth $x^t$, over the four runs and is the metric by which the efficacy of the algorithms is determined.

For the choices of parameters in~\cref{alg:AEnGMF}, we choose $M = 5$ loops of the expectation maximization algorithm, $P=1$ loops of sub-sampled Newton's method, sampling $S = N$ exactly as many samples as there are particles, and a high learning rate of $\alpha = 1$. The Rayleigh distribution with mean of Silverman's rule-of-thumb is chosen for the bandwidth parameter just like in~\cref{sec:bandwidth-distribution-choice}.

The results of the experiments are visually demonstrated in~\cref{fig:lorenz63-experiment}. As can be seen, the AEnGMF is consistently lower in error than the EnGMF with bandwidth equivalent to Silverman's rule-of-thumb, and provides lower error in the particle number range of $N=25$ to $N = 100$. The EnGMF with scaling factor $s = 0.3$ is the only algorithm to perform worse than the EnKF for $N=25$, but also produces the lowest error between $N = 300$ and $N = 500$.

It can be seen that the AEnGMF converges to true Bayesian inference in the limit of particle number ($N\to\infty$), which is a strong confirmation of the result identified by \cref{thm:shrinking-bandwidth}.
Although this is a strong result, the practical use of the Rayleigh distribution~\cref{sec:bandwidth-distribution-choice} for achieving this effect can be questioned as the convergence is clearly sub-optimal.
A better choice of the distribution of the bandwidth parameter is required.

\subsection{Lorenz '96}

\begin{figure}[t]
    \centering
    \begin{tikzpicture}
    \begin{axis}[clean,
        cycle list name=tol,
        xtick=data,
        table/col sep=comma,
        xmin = 3,
        xmax = 42,
        ymin = -0.2,
        ymax = 5.5,
        xlabel = {Number of Particles (N)},
        ylabel = {Mean Spatio-temporal RMSE},
        every axis plot/.append style={line width=2pt, mark size=3.5pt},
        legend style={at={(1,0.8)},anchor=center},
        legend cell align={left}]
    \addplot[color=black!35,forget plot, dotted] table [x=N,y expr=0.2598, mark=none]{data/L96data.csv}  node[pos=1, right] {\,\,EnKF ($N\to\infty)$};
    \addplot table [x=N, y=EnGMFEM, col sep=comma] {data/L96data.csv};
    \addlegendentry{Shr-AEnGMF};
    \addplot table [x=N, y=EnGMF, col sep=comma] {data/L96data.csv};
    \addlegendentry{Shr-EnGMF};
    \addplot table [x=N, y=EnGMFEMloc, col sep=comma] {data/L96data.csv};
    \addlegendentry{LAEnGMF};
    \addplot table [x=N, y=EnGMFloc, col sep=comma] {data/L96data.csv};
    \addlegendentry{LEnGMF};
    \addplot table [x=N, y=ETKF, col sep=comma] {data/L96data.csv};
    \addlegendentry{LEnKF};
    \end{axis}
    \end{tikzpicture}
    \caption{Simulation results for the Lorenz '96 equations for five different data assimilation algorithms. The dark blue line with circle marks represents the AEnGMF with a shrinkage-based estimate to the covariance , with the light red line with square represents the standard EnGMF with a shrinkage-based estimate to the covariance, the yellow line with x marks represents a localized AEnGMF, the light-blue line with plus marks represents a localized EnGMF, and the raspberry line with diamond marks represnts the localized EnKF.}
    \label{fig:lorenz96-figure}
\end{figure}
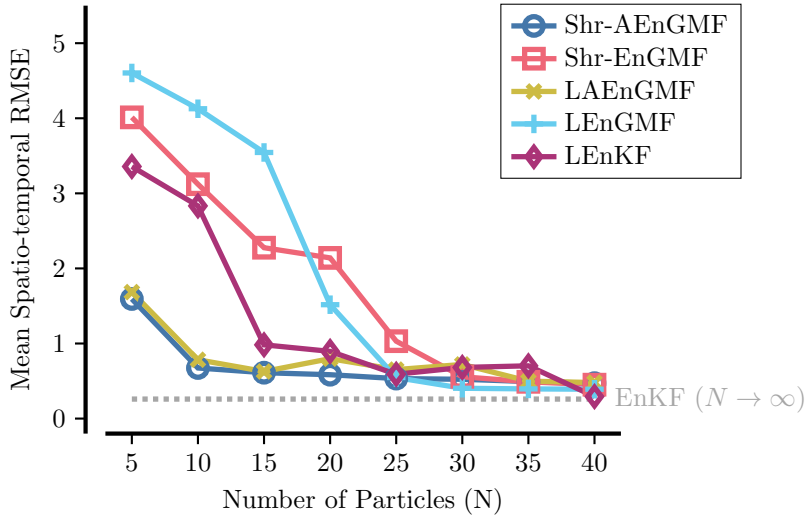

For the next set of experiments, we look at the case of an undersampled ($N \ll n$) estimate of the prior distribution. We look at two different types of covariance matrix parameterizations: one based on covariance shrinkage~\cref{eq:shrinkage-estimate} with the target matrix being the diagonal of the statistical covariance~\cref{eq:target-is-diagonal}, and a localization-based covariance matrix estimate~\cref{eq:B-localization-adaptive-estimate}.

For the model of interest we take the 40-variable Lorenz '96 equations~\cite{lorenz1996predictability},
\begin{equation}
    x_k' = -x_{k-1}(x_{k-2} - x_{k+1}) - x_k + F\dots,\quad k = 1,\dots, 40,
\end{equation}
with cyclic boundary conditions, and the forcing factor $F = 8$. For the time between assimilations we take one day of model time which is equivalent to a $\Delta t = 0.2$, leading to a high level of non-linearity in the system.

We want to compare the AEnGMF approach of adaptively choosing the parameters of the Kernel covariance with that of the more classic EnGMF approach where the parameters are determined by a known good heuristic. We also want to compare with a base-line state-of-the-art algorithm, the localizaed ensemble Kalman filter. To that end, we perform experiments on the following set of filters:
\begin{enumerate}
    \item the shrinkage-based AEnGMF (Shr-AEnGMF), with parameters of $\beta^2_i$ for the bandwidth and $\zeta_i = \tanh^{-1} \gamma_i$, for an unbounded transformation of the shrinkage parameter $0 < \gamma_i < 1$,
    \item the shrinkage-based EnGMF (Shr-EnGMF) with bandwidth defined by Silverman's rule-of-thumb~\cref{eq:Silverman's-rule}, $\beta_i^2 = \beta^2_{i,\text{Gaussian}}$ and the RBLW \cref{eq:gamma-RBLW} shrinkage parameter $\gamma_i = \gamma_{i,\text{RBLW}}$,
    \item the localized AEnGMF (LAEnGMF) with parameters of $\beta_i^2$  for the bandwidth and $\zeta_i = \sqrt{r_i}$ for an unbounded transformation of the localization radius $0 < r_i$,
    \item the localized EnGMF (LEnGMF) with with bandwidth defined by Silverman's rule-of-thumb~\cref{eq:Silverman's-rule}, $\beta_i^2 = \beta^2_{i,\text{Gaussian}}$ and a fixed radius of $r_i = 4$,
    \item and the localized EnKF (LEnKF) with fixed radius $r_i = 4$ for a useful comparison with a state-of-the-art filter.
\end{enumerate}
For the non-linear observation operator, we take the point-wise non-linear operator,
\begin{equation}
    \!H(x_i) = \frac{x_i}{2}\left[1 + \left(\frac{\lvert x_i \rvert}{10}\right)^{\omega - 1}\right], 
\end{equation}
as found in~\cite{asch2016data}, with $\omega = 5$ for a medium level of non-linearity, with the observation covariance matrix being set to $\*R = \frac{1}{4}\*I_{40}$. The number of particles is taken to range from as little as $N = 5$ to as high as $N = 40$.

All experiments were run for four independent initial ensembles for $1100$ assimilation steps with the first $100$ discarded for spinup, meaning that the first $100$ steps do not count into the error calculations to let the filter reach a steady state. For our error metric we again take the spatio-temporal RMSE~\cref{eq:RMSE}.

For the choices of parameters in~\cref{alg:AEnGMF}, we choose $M = 1$ loops of the expectation maximization algorithm, $P=1$ loops of sub-sampled Newton's method, sampling $S = 100$ to sample in the excess, and a low learning rate of $\alpha = 1e-2$. 
The Rayleigh distribution with mean of Silverman's rule-of-thumb is chosen for the bandwidth parameter just like in~\cref{sec:bandwidth-distribution-choice}, with both the radius $r$ and shrinkage parameter $\gamma$ having distributions proportional to one along all of their support as they are not required for convergence.

The results for this round of experiments can be seen in figure~\cref{fig:lorenz96-figure}. 
At around $N = 30$ particles, all algorithms perform roughly the same, thus the interesting behavior occurs when there are less particles. 
Both versions of the EnGMF without adaptive covariance estimates (Shr-EnGMF and LEnGMF) perform worse than the localized EnKF.
The adaptive versions of the same algorithms (Shr-AEnGMF and LAEnGMF) perform significantly better than all other tested algorithms and practically converge for $N=10$ particles.
These results highlight the need and utility of the adaptive covariance estimate approach in the EnGMF presented in this paper.

\section{Conclusions}
\label{sec:conclusion}

By leveraging parameterized sample covariance estimates and the expectation maximization algorithm, this work introduced the adaptive ensemble Gaussian mixture filter (AEnGMF) as an extention of the ensemble Gaussian mixture filter (EnGMF). Theoretical results about the convergence properties of this filter were derived by making assumptions about the distribution of the kernel bandwidth. 
Numerical results have verified the theoretical convergence properties of the AEnGMF, and have shown that for a certain set of parameters the AEnGMF has superior converge to that of the EnGMF.

Future work could extend the AEnGMF to a smoothing~\cite{reich2015probabilistic} framework, a hybrid filtering~\cite{frei2013bridging} framework, and to a multifidelity filtering~\cite{popov2021multifidelity} framework.

An active research direction is in applying the AEnGMF to a real-world orbit tracking problem~\cite{yun2022kernel}.

Work exploring practical consideration on choosing the parameters discussed in~\cref{rem:choosing-practical-parameters} is also of independent interest.

\bibliographystyle{siamplain}
\bibliography{
    bibfiles/covarianceshrinkage,
    bibfiles/stochasticoptimization,
    bibfiles/em, 
    bibfiles/engmf, 
    bibfiles/kernelapproximation, 
    bibfiles/filteringgeneral,
    bibfiles/probability,
    bibfiles/multifidelity,
    bibfiles/problems,
    bibfiles/misc
    }

\end{document}